\numberwithin{equation}{section}
\def\thm@space@setup{\thm@preskip=1em
	\thm@postskip=1em}
\theoremstyle{plain}
\newtheorem{theorem}{Theorem}[section]
\newtheorem{lemma}[theorem]{Lemma}
\newtheorem{corollary}[theorem]{Corollary}
\newtheorem{proposition}[theorem]{Proposition}
\theoremstyle{definition}
\newtheorem{definition}[theorem]{Definition}
\theoremstyle{remark}
\theoremstyle{plain}
\newcommand{\Z}{\mathbb{Z}}
\newcommand{\Q}{\mathbb{Q}}
\newcommand{\Aut}{\mathrm{Aut}}
\newcommand{\Gal}{\mathrm{Gal}}
\newcommand{\lcm}{\mathrm{lcm}}
\newcommand{\sep}{\mathrm{sep}}
\newcommand{\Stab}{\mathrm{Stab}}
\newcommand{\scr}{\mathscr}
\newcommand{\cal}{\mathcal}
\newcommand{\nbhd}{\mathrm{nbhd}}
\title{The Index Problem for Subgroup Intersections}
\author{Haran Mouli\,\orcidlink{0009-0000-5891-6618}}\email{hmouli@wisc.edu}
\address{Department of Mathematics, University of Wisconsin, Madison, WI, 53706}
\date{\today}
\begin{document}
	
	\begin{abstract}
		In \cite{DDL} and \cite{DM}, Drungilas et al. study the problem of which triples of positive integers $(a, b, c)$ can be realized as $([E: \Q], [F: \Q], [EF: \Q])$, where $E$ and $F$ are number fields, using techniques from field theory. We shall study this problem rephrased in the language of groups using the Galois correspondence to simplify and generalize their results.
	\end{abstract}
	
	\maketitle 
	
	\tableofcontents

	\section{Introduction}
	
	\begin{definition}\label{def: Index-Realizable}
		We say that a triple $(a, b, c)$ of positive integers is \textbf{index-realizable} if there exists a group $G$ with finite index subgroups $H$ and $K$ such that $a = [G: H]$, $b = [G: K]$, and $c = [G: H \cap K]$.
	\end{definition}
	
	Note that in \autoref{def: Index-Realizable}, we may assume without loss of generality that $G$ is a \textit{finite} group by replacing all of $G, H, K, H \cap K$ with their quotients by a finite index normal subgroup of $G$ contained in $H \cap K$.
	
	In \cite{DDL} and \cite{DM}, the authors study the notion of \textbf{compositum-feasible} triples over $\Q$: these are triples of positive integers $(a, b, c)$ for which there exist number fields $E$ and $F$ such that $a = [E: \Q]$, $b = [F: \Q]$, and $c = [EF: \Q]$. By the Galois correspondence, this is equivalent to finding subgroups $H$ and $K$ as in \autoref{def: Index-Realizable} with the absolute Galois group $G = \Gal(\bar{\Q}/\Q)$. In particular, all compositum-feasible triples over $\Q$ (or any other pefect field) are index-realizable. The converse is true if one assumes a positive answer to the inverse Galois problem.
	
	Consequently, one can prove slightly stronger results by focusing on the group-theoretic problem of classifying index-realizable triples. Indeed, if $(a, b, c)$ is not index-realizable, then it is certainly not compositum-feasible over $\Q$. On the other hand, whenever we prove that $(a, b, c)$ is index-realizable, we shall do so by example, and the interested reader can verify that the groups $G$ used in our examples are all Galois groups over $\Q$. One might find class field theory or Shafarevich's theorem on solvable Galois groups useful for this purpose.
	
	In Section \ref{sec: Basic Results}, we establish some simple facts about index-realizable triples. In Section \ref{sec: Coset Intersection Graph}, we introduce our main tool, the \textbf{coset intersection graph}, which will encode the problem of index-realizability in terms of graph theory. We then provide simpler proofs to (generalizations of) results in \cite{DDL} and \cite{DM}. Finally, in Section \ref{sec: Small Triples}, we shall classify all index-realizable triples $(a, b, c)$ with $\min(a, b) \leqslant 15$ using the tools from the previous sections.

    \subsection*{Acknowledgements} The author thanks Brian Lawrence for his helpful feedback, and Nigel Boston for directing him to \cite{DM}.

	\section{Basic Results}\label{sec: Basic Results}
	
	\begin{proposition}\label{prop: Necessary Conditions}
		If $(a, b, c)$ is index-realizable, then $\lcm(a, b) \mid c$ and $c \leqslant ab$.
	\end{proposition}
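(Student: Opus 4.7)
The plan is to prove the two claims separately using standard index-counting.

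For the divisibility $\mathrm{lcm}(a,b) \mid c$, I would apply the tower law to the chain $H \cap K \leqslant H \leqslant G$ to obtain
\[
c = [G : H \cap K] = [G : H]\,[H : H \cap K] = a \cdot [H : H \cap K],
\]
so $a \mid c$; by symmetry $b \mid c$, and hence $\mathrm{lcm}(a,b) \mid c$.

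For the upper bound $c \leqslant ab$, I would exhibit an injection from the coset space $G/(H \cap K)$ into the product $G/H \times G/K$, namely the map $g(H \cap K) \mapsto (gH, gK)$. This is well-defined because $H \cap K \subseteq H$ and $H \cap K \subseteq K$, and it is injective because $gH = g'H$ and $gK = g'K$ together force $g^{-1}g' \in H \cap K$. Taking cardinalities yields $c \leqslant ab$. (Equivalently, one can observe that the map $H/(H\cap K) \to G/K$ sending $h(H \cap K) \mapsto hK$ is injective, so $[H : H \cap K] \leqslant b$, and then combine with the tower-law identity $c = a \cdot [H : H \cap K]$ from the first part.)

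Since both ingredients are elementary, there is essentially no hard step; the only mild subtlety is confirming injectivity of the diagonal map $G/(H \cap K) \hookrightarrow G/H \times G/K$, which is a routine verification. I would present the argument in the order above so that the identity $c = a\cdot[H:H\cap K]$ established in the first half can be reused to streamline the second.
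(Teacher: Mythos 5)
Your proof is correct, and the first half ($\lcm(a,b)\mid c$ via the tower law applied to $H\cap K\leqslant H\leqslant G$ and its mirror) is the same argument the paper gives via Lagrange. For the bound $c\leqslant ab$ you diverge: the paper invokes the product formula $|HK| = |H|\,|K|/|H\cap K|$ together with $|HK|\leqslant |G|$, which is an element count and relies on the reduction (made just after Definition~\ref{def: Index-Realizable}) to $G$ finite, whereas you exhibit the diagonal injection $g(H\cap K)\mapsto (gH,gK)$ from $G/(H\cap K)$ into $G/H\times G/K$. Your route works verbatim for an arbitrary group with finite-index subgroups, with no finiteness reduction needed, and it has the pleasant side effect of anticipating the paper's own Section~\ref{sec: Coset Intersection Graph}: the very same map $x(H\cap K)\mapsto(xH,xK)$ is shown there to be a bijection onto the edge set of the coset intersection graph, so your inequality is just the observation that a bipartite graph on parts of sizes $a$ and $b$ has at most $ab$ edges. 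The paper's version buys a sharper statement in exchange --- equality $c=ab$ holds exactly when $HK=G$ --- but for the proposition as stated both arguments are equally complete.
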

	
	\begin{proof}
		Let $G$ be a finite group with subgroups $H$ and $K$ realizing the triple $(a, b, c)$. The first part follows from Lagrange's theorem since $|H \cap K|$ must divide both $|H|$ and $|K|$. For the second part, apply the formula:
		\[|HK| = \frac{|H| \cdot |K|}{|H \cap K|}\]
		and use the fact that $|HK| \leqslant |G|$.
	\end{proof}
	
	The converse to \autoref{prop: Necessary Conditions} is false. It is proven in \cite{DDL} that the triple $(5, 5, 15)$ is the smallest triple satisfying the constraints in \autoref{prop: Necessary Conditions} which is not compositum-realizable over $\Q$. We shall prove in Section \ref{sec: Coset Intersection Graph} that it is an not index-realizable triple either.
	
	\begin{proposition}\label{prop: Products of Triples}
		If $(a, b, c)$ and $(a', b', c')$ are both index-realizable, then so is $(aa', bb', cc')$.
	\end{proposition}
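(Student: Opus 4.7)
The plan is to prove this by an explicit direct product construction, which is the most natural way to combine two separate realizations into one. Let $G$ be a finite group with subgroups $H, K$ realizing $(a,b,c)$, and let $G'$ be a finite group with subgroups $H', K'$ realizing $(a', b', c')$. I would propose considering the group $G \times G'$ together with the subgroups $H \times H'$ and $K \times K'$.

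The verification then reduces to three routine index computations. First, since $[G \times G' : H \times H'] = [G:H] \cdot [G':H']$, this index equals $aa'$, and similarly $[G \times G' : K \times K'] = bb'$. Second, and the only step that requires any thought, one observes that intersections distribute over direct products in the sense that
\[(H \times H') \cap (K \times K') = (H \cap K) \times (H' \cap K'),\]
which follows immediately from the componentwise definition of membership in a direct product. This yields
\[[G \times G' : (H \times H') \cap (K \times K')] = [G : H \cap K] \cdot [G' : H' \cap K'] = cc'.\]

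There is essentially no obstacle here; the argument is purely formal and uses only the multiplicativity of indices under direct products plus the distributivity of intersection over direct products. The only subtlety worth flagging is that we invoke the reduction to finite groups (via the remark after Definition \ref{def: Index-Realizable}) so that we can freely speak of orders and apply Lagrange's theorem if desired, though in fact the index computation works without finiteness. I would keep the proof to three or four lines.
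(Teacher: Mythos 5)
Your proposal is correct and takes exactly the same approach as the paper, which also realizes $(aa', bb', cc')$ via $(G \times G', H \times H', K \times K')$. Your verification of the intersection identity $(H \times H') \cap (K \times K') = (H \cap K) \times (H' \cap K')$ is the detail the paper leaves implicit.
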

	
	\begin{proof}
		If the triples $(a, b, c)$ and $(a', b', c')$ are realized by $(G, H, K)$ and $(G', H', K')$, then the product $(aa', bb', cc')$ is realized by $(G \times G', H \times H', K \times K')$.
	\end{proof}
	
	\begin{lemma}\label{lem: (a, b, ab)}
		For any positive integers $a$ and $b$, the triple $(a, b, ab)$ is index-realizable.
	\end{lemma}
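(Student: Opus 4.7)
The plan is to exhibit an explicit group and pair of subgroups witnessing the triple $(a,b,ab)$. The most direct construction is to take $G = \Z/a\Z \times \Z/b\Z$ with $H = \{0\} \times \Z/b\Z$ and $K = \Z/a\Z \times \{0\}$. One then checks the three index equalities mechanically: $|G| = ab$, so $[G:H] = ab/b = a$ and $[G:K] = ab/a = b$, while $H \cap K = \{(0,0)\}$ gives $[G: H\cap K] = ab$.

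An alternative, slightly more structural, route is to decompose the triple as $(a,b,ab) = (a,1,a) \cdot (1,b,b)$ in the sense of \autoref{prop: Products of Triples}. The triple $(a,1,a)$ is realized by any group $G_1$ of order $a$ with $H_1 = \{1\}$ and $K_1 = G_1$; symmetrically for $(1,b,b)$. Combining them via \autoref{prop: Products of Triples} immediately yields the desired realization. This phrasing has the minor advantage of foregrounding the multiplicativity lemma just established, and it lets the reader see that $G$ need not be abelian (any groups of orders $a$ and $b$ will do).

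There is essentially no obstacle: the only thing to verify is that the intersection of the two "coordinate" subgroups is trivial, which is immediate from the direct product structure. For the parenthetical remark in the introduction that the witnessing group should be a Galois group over $\Q$, the chosen $G$ is abelian, hence realizable over $\Q$ by Kronecker--Weber (or by taking a suitable subfield of a cyclotomic extension), so no further work is required on that front.
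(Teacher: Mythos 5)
Your primary construction is exactly the paper's proof: $G = \Z/a \times \Z/b$ with $H$ and $K$ the two coordinate factors, and your index verifications are correct. The alternative decomposition via \autoref{prop: Products of Triples} is a valid (if unnecessary) embellishment, so nothing further is needed.
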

	
	\begin{proof}
		Use $G = \Z/a \times \Z/b$ with subgroups $H = \Z/b$ and $K = \Z/a$.
	\end{proof}
	
	The following proposition generalizes \autoref{lem: (a, b, ab)}:
	
	\begin{proposition}\label{prop: (a, b, c) with lcm(a, b) dividing c dividing ab}
		If $(a, b, c)$ is a triple of positive integers with $\lcm(a, b) \mid c$ and $c \mid ab$, then $(a, b, c)$ is index-realizable.
	\end{proposition}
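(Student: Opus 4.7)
My plan is to reduce this to the two results already established: \autoref{lem: (a, b, ab)} and \autoref{prop: Products of Triples}. The divisibility constraints translate cleanly into a factorization of $(a, b, c)$ as a coordinate-wise product of two triples that we already know are index-realizable.

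First I would rewrite the hypothesis. The conditions $\lcm(a, b) \mid c \mid ab$ are equivalent to asserting that $d := ab/c$ is a positive integer dividing $\gcd(a, b)$, since $\lcm(a, b) \mid c$ rearranges to $d \mid ab/\lcm(a, b) = \gcd(a, b)$. Setting $a' = a/d$ and $b' = b/d$, both integers, I get the identity
\[(a, b, c) \;=\; (d, d, d) \cdot (a', b', a'b'),\]
with multiplication taken componentwise, using $c = ab/d = d \cdot a'b'$.

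Next I would verify that both factors on the right are index-realizable. The triple $(a', b', a'b')$ is handled directly by \autoref{lem: (a, b, ab)}. For $(d, d, d)$, I would take $G = \Z/d$ with $H = K = \{0\}$, so that each of the three indices equals $d$. \autoref{prop: Products of Triples} then concludes that the product $(a, b, c)$ is index-realizable.

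I do not expect any genuine obstacle: the whole argument hinges on noticing the factorization, which is forced by the arithmetic of $\lcm$ and $\gcd$. An equivalent but more explicit presentation would collapse the two factors into a single group $G = \Z/d \times \Z/a' \times \Z/b'$, taking $H$ and $K$ to be two of the three axis subgroups; but routing through \autoref{prop: Products of Triples} makes the structure more transparent and reuses the earlier machinery of the section.
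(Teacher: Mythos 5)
Your proof is correct and takes essentially the same approach as the paper: decompose $(a,b,c)$ componentwise into triples handled by \autoref{lem: (a, b, ab)} and a constant triple, then apply \autoref{prop: Products of Triples}. The only difference is cosmetic — the paper splits into three factors $(e,e,e^2)\cdot(f,f,f)\cdot(m,n,mn)$ with $\gcd(a,b)=ef$, whereas you merge the first and third into the single factor $(a',b',a'b')$, which is a slight streamlining of the same argument.
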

	
	\begin{proof}
		Let $d = \gcd(a, b)$, $a = dm$, and $b = dn$. We can rewrite the constraints in the proposition as $dmn \mid c$ and $c \mid d^2mn$. Thus, there exists a positive integer $e$ such that $e \mid d$ and $c = demn$. Let $f$ be such that $d = ef$; the triple $(a, b, c)$ is the product of the triples $(e, e, e^2)$, $(f, f, f)$, and $(m, n, mn)$. The first and third triples are index-realizable by \autoref{lem: (a, b, ab)}, and so is the second since we can choose $G = \Z/f$ with $H$ and $K$ both trivial. The proposition follows from \autoref{prop: Products of Triples}.
	\end{proof}
	
	\begin{theorem}\label{thm: Inseparable Extensions}
		Let $k$ be any field, not necessary perfect. If $E/k$ and $F/k$ are finite field extensions, then the triple $([E: k], [F: k], [EF: k])$ is index-realizable.
	\end{theorem}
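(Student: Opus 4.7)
The plan is to split the extensions into their separable and purely inseparable parts and handle each piece with a different tool, then combine via \autoref{prop: Products of Triples}.

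First I would handle the separable case. Suppose $E/k$ and $F/k$ are separable, and let $L/k$ be a finite Galois extension containing $EF$. Set $G = \Gal(L/k)$, $H = \Gal(L/E)$, $K = \Gal(L/F)$. By the fundamental theorem of Galois theory, $[G:H] = [E:k]$, $[G:K] = [F:k]$, and the fixed field of $H \cap K$ is $EF$, so $[G: H \cap K] = [EF:k]$. This settles the perfect/separable case.

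Next I would reduce the general case to the separable case. Let $E_s$ and $F_s$ denote the separable closures of $k$ in $E$ and $F$ respectively, so that $E/E_s$ and $F/F_s$ are purely inseparable. Write $a = a_s a_i$, $b = b_s b_i$ with $a_s = [E_s:k]$, $a_i = [E:E_s]$ and similarly for $b$. The key observation is that $E_s F_s$ is the separable closure of $k$ inside $EF$: it is separable over $k$ as the compositum of separable extensions, and $EF/E_s F_s$ is purely inseparable since it is generated by elements purely inseparable over $E_s F_s$. Setting $c_s = [E_sF_s:k]$ and $c_i = [EF:E_sF_s]$, we have $c = c_s c_i$, and by the separable case applied to $E_s, F_s$ the triple $(a_s, b_s, c_s)$ is index-realizable.

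The main technical step is verifying that the purely inseparable triple $(a_i, b_i, c_i)$ satisfies the hypotheses of \autoref{prop: (a, b, c) with lcm(a, b) dividing c dividing ab}. The intersection $E \cap E_sF_s$ is both purely inseparable over $E_s$ (since it lies in $E$) and separable over $E_s$ (since $E_sF_s/E_s$ is separable), hence equals $E_s$; this yields $[EF_s : E_sF_s] = [E:E_s] = a_i$, and passing up through the tower $E_sF_s \subseteq EF_s \subseteq EF$ gives $a_i \mid c_i$. By symmetry $b_i \mid c_i$. Since $a_i, b_i, c_i$ are all powers of $\mathrm{char}(k) = p$, $\lcm(a_i, b_i) = \max(a_i, b_i) \mid c_i$. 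For the upper bound, the tower shows $c_i = [EF:EF_s]\cdot[EF_s:E_sF_s] \leqslant b_i \cdot a_i$, and since these are all powers of $p$, this inequality upgrades to the divisibility $c_i \mid a_i b_i$.

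With these divisibilities in hand, \autoref{prop: (a, b, c) with lcm(a, b) dividing c dividing ab} realizes $(a_i, b_i, c_i)$, the Galois argument realizes $(a_s, b_s, c_s)$, and \autoref{prop: Products of Triples} realizes their product $(a, b, c)$. The main obstacle I anticipate is the careful bookkeeping around the separable/inseparable decomposition of a compositum — in particular the identity $E \cap E_sF_s = E_s$, which is what prevents the purely inseparable part of $EF$ from being strictly smaller than $\lcm(a_i, b_i)$.
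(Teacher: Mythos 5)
Your proof is correct and follows the same route as the paper: split each degree into its separable and purely inseparable parts, realize the separable triple via the Galois correspondence, realize the purely inseparable triple via \autoref{prop: (a, b, c) with lcm(a, b) dividing c dividing ab}, and combine with \autoref{prop: Products of Triples}. You in fact supply the verification that $(a_i, b_i, c_i)$ satisfies $\lcm(a_i, b_i) \mid c_i \mid a_i b_i$, which the paper leaves implicit; the only step stated a bit too quickly is deducing $[EF_s : E_sF_s] = [E : E_s]$ from $E \cap E_sF_s = E_s$ alone, which really rests on the linear disjointness of separable and purely inseparable extensions (trivial intersection does not imply the degree formula in general).
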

	
	\begin{proof}
		For any finite extension $L/k$, let $L^{\sep}/k$ be the maximal separable subextension. Then, we can decompose $([E: k], [F: k], [EF: k])$ as the product:
		\[([E^{\sep}: k], [F^{\sep}: k], [(EF)^{\sep}: k]) \cdot (p^{\alpha}, p^{\beta}, p^{\gamma})\]
		where $p = \max(1, \mathrm{char}(k))$ and $p^{\alpha}, p^{\beta}, p^{\gamma}$ are the inseparable degrees of $E/k$, $F/k$, and $EF/k$ respectively. The first triple is index-realizable by the Galois correspondence and the second is index-realizable by \autoref{prop: (a, b, c) with lcm(a, b) dividing c dividing ab}. It follows that the triple $([E: k], [F: k], [EF: k])$ is index-realizable by \autoref{prop: Products of Triples}.
	\end{proof}
	
	\begin{corollary}
		For a triple of positive integers $(a, b, c)$, the following are equivalent:
		\begin{enumerate}
			\item $(a, b, c)$ is index-realizable.
			\item $(a, b, c)$ is compositum-feasible over some number field $k$, depending on $(a, b, c)$.
			\item $(a, b, c)$ is compositum-feasible over some field $k$, depending on $(a, b, c)$.
		\end{enumerate}
	\end{corollary}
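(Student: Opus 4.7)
The implication $(2)\Rightarrow (3)$ is immediate since every number field is a field, and $(3)\Rightarrow (1)$ is precisely the content of \autoref{thm: Inseparable Extensions}. The substantive content is $(1)\Rightarrow (2)$, which requires realizing an abstract index-configuration over a number field.

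Suppose $(G, H, K)$ realizes $(a, b, c)$; I may take $G$ finite by the remark following \autoref{def: Index-Realizable}. Although the inverse Galois problem over $\Q$ is open in general, the statement gives us complete freedom in the choice of base number field $k$, so it suffices to realize $G$ as a Galois group over \emph{some} number field. The plan is to embed $G \injto S_n$ via the left regular representation with $n = |G|$ and then invoke the classical existence of an $S_n$-extension $L/\Q$ --- for instance, the splitting field of $x^n - x - 1$, which has Galois group $S_n$ by a theorem of Selmer. Setting $k := L^G$, where $G$ is identified with its image in $S_n \cong \Gal(L/\Q)$, we obtain a number field $k$ with $\Gal(L/k) = G$.

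The Galois correspondence applied to $L/k$ then concludes the proof. Letting $E := L^H$ and $F := L^K$, we have $[E:k] = [G:H] = a$ and $[F:k] = [G:K] = b$, while $EF$ corresponds to $H \cap K$, giving $[EF:k] = [G: H\cap K] = c$. Hence $(a,b,c)$ is compositum-feasible over the number field $k$.

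I do not anticipate any genuine obstacle. The only external ingredient is the classical existence of an $S_n$-extension of $\Q$; beyond that, everything is a direct application of the Galois correspondence, and the liberty to let $k$ depend on $(a,b,c)$ sidesteps any inverse-Galois-problem difficulty.
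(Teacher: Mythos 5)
Your proof is correct and follows the same route as the paper: the paper disposes of $(1)\Rightarrow(2)$ by citing the classical fact that every finite group is a Galois group over some number field, and your argument via the regular representation $G \injto S_n$ and an $S_n$-extension of $\Q$ is exactly the standard proof of that fact, with the remaining implications handled identically. (Minor attribution point: Selmer proved irreducibility of $x^n-x-1$; that its Galois group is $S_n$ is due to Osada, though any classical source for $S_n$-extensions of $\Q$ suffices here.)
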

	
	\begin{proof}
		We have $(1) \Rightarrow (2)$ since every finite group can be realized as a Galois group over some number field. $(2) \Rightarrow (3)$ is trivial and $(3) \Rightarrow (1)$ is \autoref{thm: Inseparable Extensions}.
	\end{proof}
	
	Finally, we include some specific examples of index-realizable triples.
	
	\begin{lemma}\label{lem: (n, n, n(n-1))}
		For any positive integer $n > 1$, the triple $(n, n, n(n-1))$ is index-realizable.
	\end{lemma}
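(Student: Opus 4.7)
The bound $n(n-1) = n!/(n-2)!$ strongly suggests using the symmetric group $S_n$ in its natural action on $\{1, 2, \ldots, n\}$, with the two subgroups being point stabilizers of two distinct points. My approach will be to take $G = S_n$, $H = \Stab(1)$, and $K = \Stab(2)$, and verify the three indices directly using the orbit--stabilizer theorem.

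First, since $S_n$ acts transitively on $\{1, \ldots, n\}$, orbit--stabilizer gives $[G:H] = [G:K] = n$. Next, $H \cap K$ is the subgroup fixing both $1$ and $2$, which is isomorphic to the symmetric group on the remaining $n-2$ symbols; thus $|H \cap K| = (n-2)!$, giving $[G : H \cap K] = n!/(n-2)! = n(n-1)$. That is exactly the triple we want. The special cases $n=2$ and $n=3$ should be checked to make sure the statement is not vacuous in a bad way, but these are trivial (for $n=2$ we get $(2,2,2)$ via $S_2$ with both subgroups trivial).

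There is no real obstacle here; the only step requiring a moment of thought is guessing the correct group, and the factor $n(n-1)$ essentially forces the choice. The proof will be a single short paragraph invoking the orbit--stabilizer theorem twice.
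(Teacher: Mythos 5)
Your construction is exactly the one the paper uses: $G = S_n$, $H = \Stab(1)$, $K = \Stab(2)$, with the indices computed via orbit--stabilizer. The proposal is correct and matches the paper's proof.
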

	
	\begin{proof}
		Let $G = S_n$, $H = \Stab(1)$, and $K = \Stab(2)$.
	\end{proof}
	
	\begin{lemma}\label{lem: (n, n, np) if p divides phi(n)}
		For any positive integer $n$ and prime divisor $p \mid \varphi(n)$, the triple $(n, n, np)$ is index-realizable.
	\end{lemma}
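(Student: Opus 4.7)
The plan is to construct an explicit example via a semidirect product. Since $p \mid \varphi(n) = |(\Z/n)^{\times}|$, Cauchy's theorem yields an element $\sigma \in (\Z/n)^{\times}$ of order $p$, and I would form $G = \Z/n \rtimes \Z/p$, where the generator of $\Z/p$ acts on $\Z/n$ by multiplication by $\sigma$; this group has order $np$. For the two index-$n$ subgroups, I would take $H = \{0\} \times \Z/p$, the natural complement to $\Z/n$ in the semidirect product, and its conjugate $K = xHx\inv$ for $x = (1, 0) \in G$. Both are copies of $\Z/p$, so both have index $n$ in $G$, and it suffices to show that $H \cap K$ is trivial, which will force $[G : H \cap K] = |G| = np$.

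A direct semidirect-product computation gives $K = \{(1 - \sigma^j, j) : j \in \Z/p\}$, while $H = \{(0, j) : j \in \Z/p\}$. Matching coordinates, an element of $H \cap K$ requires $1 - \sigma^j \equiv 0 \pmod n$, i.e., $\sigma^j \equiv 1 \pmod n$; since $\sigma$ has order exactly $p$ in $(\Z/n)^{\times}$, this forces $j = 0$ in $\Z/p$, so $H \cap K$ is trivial.

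The main step is the intersection calculation, and the potential pitfall would be choosing a bad $x$: if $x$ happened to lie in the kernel of multiplication by $\sigma^j - 1$ for some $j \in \{1, \dots, p-1\}$, then $H \cap K$ would be nontrivial. Choosing $x = 1$ sidesteps this cleanly, since $(\sigma^j - 1) \cdot 1 \equiv 0 \pmod n$ reduces to $\sigma^j \equiv 1 \pmod n$, which fails exactly because $\sigma$ has order $p$. Beyond this, the only subtlety is the semidirect-product bookkeeping, but it collapses very cleanly once $x = 1$ is fixed.
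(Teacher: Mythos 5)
Your proof is correct and takes essentially the same approach as the paper: form a non-trivial semidirect product $\Z/n \rtimes \Z/p$ (which exists because $p \mid \varphi(n)$) and take two distinct complements of $\Z/n$, which necessarily intersect trivially since they have prime order $p$. Your version simply makes explicit the conjugation computation that the paper leaves implicit.
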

	
	\begin{proof}
		Let $G = \Z/n \rtimes \Z/p$ be a non-trivial semidirect product; such semidirect products exist since $p \mid \varphi(n)$. Pick $H$ and $K$ to be any two distinct complements to the normal subgroup $\Z/n$.
	\end{proof}

	\section{Coset Intersection Graph}\label{sec: Coset Intersection Graph}
	
	\begin{definition}
		Let $H$ and $K$ be finite index subgroups of a group $G$. The \textbf{coset intersection graph} for $(G, H, K)$ is the bipartite graph $\cal{G}$ with:
		\begin{itemize}
			\item partitions as the coset spaces $\cal{H} = G/H$ and $\cal{K} = G/K$.
			\item $(yH, zK) \in E(\cal{G})$ if and only if $yH \cap zK \neq \emptyset$.
		\end{itemize}
	\end{definition}
	
	\begin{proposition}
		Let $\cal{G}$ be the coset intersection graph corresponding to $(G, H, K)$. The map $G/(H \cap K) \to E(\cal{G})$ given by:
		\[x(H \cap K) \mapsto (xH, xK)\]
		is a well-defined and canonical bijection.
	\end{proposition}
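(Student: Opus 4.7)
The plan is to check the four properties required of this map: well-definedness, that its image actually lies in $E(\mathcal{G})$, injectivity, and surjectivity. Canonicity will be immediate from the fact that the definition refers only to the group operation and the subgroups $H, K$, with no auxiliary choices.

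For well-definedness, I would suppose that $x(H \cap K) = y(H \cap K)$; then $x\inv y \in H \cap K$, so in particular $x\inv y \in H$ and $x\inv y \in K$, which give $xH = yH$ and $xK = yK$. To see that $(xH, xK)$ is in fact an edge of $\cal{G}$, observe that $x$ itself belongs to $xH \cap xK$, so this intersection is nonempty.

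Injectivity runs in the reverse direction of the well-definedness argument: if $(xH, xK) = (yH, yK)$, then from $xH = yH$ and $xK = yK$ we get $x\inv y \in H$ and $x\inv y \in K$, so $x\inv y \in H \cap K$, i.e. $x(H \cap K) = y(H \cap K)$.

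The only step with any content is surjectivity, though it too is short. Given an edge $(yH, zK) \in E(\cal{G})$, by definition $yH \cap zK$ is nonempty, so we may pick any element $x$ of this intersection. Then $xH = yH$ (since $x \in yH$) and $xK = zK$ (since $x \in zK$), so the edge $(yH, zK) = (xH, xK)$ lies in the image of $x(H \cap K)$. No step should present a real obstacle; the main thing to keep straight is simply the bookkeeping of which coset representatives are being used on each side.
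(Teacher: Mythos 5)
Your proof is correct and follows essentially the same route as the paper's: verify well-definedness via coset containment, note the image lands in $E(\mathcal{G})$ because $x \in xH \cap xK$, and prove surjectivity by choosing a representative $x \in yH \cap zK$. The only cosmetic difference is that the paper deduces injectivity from the identity $xH \cap xK = x(H \cap K)$ while you argue directly with coset representatives; these are equivalent.
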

	
	\begin{proof}
		It is easy to see that the map is well-defined, and hence, canonical. It is injective since $xH \cap xK = x(H \cap K)$. On the other hand, for any $(yH, zK) \in E(\cal{G})$, we can pick some $x \in yH \cap zK$ and relabel the edge as $(xH, xK)$, proving that the map is surjective, and hence a bijection.
	\end{proof}
	
	In particular, observe that any vertex $yH$ in $\cal{H}$ has incident edges as the $(H \cap K)$-cosets contained in $yH$, and similarly for $\cal{K}$. Thus, the vertices in $\cal{H}$ and $\cal{K}$ have degrees $[H: H \cap K]$ and $[K: H \cap K]$ respectively.
	
	\begin{definition}
		For a bipartite graph $\Gamma$ with partitions $\eta$ and $\kappa$, $\Aut(\Gamma)$ is the group of automorphisms of $\Gamma$ stabilizing $\eta$ and $\kappa$ set-wise.
	\end{definition}
	
	If $\cal{G}$ is the coset intersection graph of $(G, H, K)$, then it is easy to check that the action of $G$ on $G/H$, $G/K$, and $G/(H \cap K)$ translates into an action of $G$ on $\cal{G}$ which is transitive on the edges. This implies that $\Aut(\cal{G})$ also acts transitively on $E(\cal{G})$.
	
	\begin{theorem}\label{thm: Groups to Graphs}
		A triple $(a, b, c)$ of positive integers is index-realizable if and only if there exists a bipartite graph $\Gamma$ with partitions $\eta$ and $\kappa$ such that:
		\begin{enumerate}
			\item $\Gamma$ has no isolated vertices.
			\item $|\eta| = a$, $|\kappa| = b$, and $|E(\Gamma)| = c$.
			\item $\Aut(\Gamma)$ acts transitively on $E(\Gamma)$.
		\end{enumerate}
	\end{theorem}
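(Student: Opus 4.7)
The plan is to prove the two directions separately, with the forward direction following essentially from the preceding discussion and the backward direction obtained by extracting a group from the symmetries of $\Gamma$.

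For the forward direction, suppose $(a,b,c)$ is realized by $(G,H,K)$. I would take $\Gamma$ to be the coset intersection graph $\mathcal{G}$ with $\eta = G/H$ and $\kappa = G/K$. Condition (2) is immediate: $|\eta|=[G:H]=a$, $|\kappa|=[G:K]=b$, and the edge-coset bijection gives $|E(\mathcal{G})|=[G:H\cap K]=c$. Condition (1) follows because each vertex $yH$ is incident to the edge $(yH,yK)$, so no vertex is isolated. For condition (3), the left-multiplication action of $G$ on $G/H$, $G/K$, and $G/(H\cap K)$ preserves the partitions and is compatible with the bijection to edges, so it defines a homomorphism $G\to\Aut(\mathcal{G})$ whose image already acts transitively on $E(\mathcal{G})$; hence $\Aut(\mathcal{G})$ does too.

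For the reverse direction, given $\Gamma$, I would take $G = \Aut(\Gamma)$ and pick a fixed edge $e_0 = (v_0,w_0)$ with $v_0\in\eta$, $w_0\in\kappa$, setting $H = \Stab(v_0)$ and $K = \Stab(w_0)$. Because every element of $G$ preserves $\eta$ and $\kappa$ set-wise, an automorphism stabilizes the edge $\{v_0,w_0\}$ set-wise if and only if it fixes both endpoints, so $H\cap K = \Stab(e_0)$. By orbit–stabilizer it then suffices to show that $G$ acts transitively on $\eta$, on $\kappa$, and on $E(\Gamma)$; the last is hypothesis (3), and then one obtains $[G:H\cap K]=|E(\Gamma)|=c$.

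The one delicate step, which I expect to be the main (though mild) obstacle, is deducing vertex-transitivity from edge-transitivity. Given two vertices $v_1,v_2\in\eta$, condition (1) furnishes incident edges $e_1=(v_1,w_1)$ and $e_2=(v_2,w_2)$; by (3) some $\phi\in\Aut(\Gamma)$ sends $e_1$ to $e_2$, and because $\phi$ preserves the bipartition set-wise it must send $v_1$ (the $\eta$-endpoint of $e_1$) to $v_2$ (the $\eta$-endpoint of $e_2$) rather than to $w_2$. The analogous argument on $\kappa$ is identical. This gives $[G:H]=|\eta|=a$ and $[G:K]=|\kappa|=b$, completing the proof.
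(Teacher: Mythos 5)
Your proposal is correct and follows essentially the same route as the paper: the coset intersection graph for the forward direction, and $(\Aut(\Gamma),\Stab(v_0),\Stab(w_0))$ for an edge $(v_0,w_0)$ in the reverse direction, with vertex-transitivity deduced from edge-transitivity via condition (1). The only difference is that you spell out a few details (no isolated vertices in $\mathcal{G}$, why the base pair should be an edge, why bipartition-preservation forces $v_1\mapsto v_2$) that the paper leaves implicit.
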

	
	\begin{proof}
		$(\Rightarrow)$ Let $(G, H, K)$ realize the triple $(a, b, c)$. If $\cal{G}$ is the corresponding coset intersection graph, then the action of $G$ on $G/H$, $G/K$, and $G/(H \cap K)$ translates into an action of $G$ on $\cal{G}$, which is transitive on $E(\cal{G}) = G/(H \cap K)$. This implies that $\Aut(\cal{G})$ also acts transitively on $E(\cal{G})$.
		
		$(\Leftarrow)$ Pick any vertices $h \in \eta$ and $k \in \kappa$. Let $(G, H, K) = (\Aut(\Gamma), \Stab(h), \Stab(k))$. By $(1)$ and $(3)$, we see that $\Aut(\Gamma)$ acts transitively on $\eta$ and $\kappa$, so we have $[G: H] = a$ and $[G: K] = b$. On the other hand, $H \cap K = \Stab(hk)$, so by $(2)$ and $(3)$, we have $[G: H \cap K] = c$. It follows that $(a, b, c)$ is index-realizable.
	\end{proof}

	We shall use \autoref{thm: Groups to Graphs} to prove that certain triples are not index-realizable. We begin with the following theorem; $(1)$ is Theorem 2 in \cite{DM} (where it is proven using field theory) and $(2)$ is a generalization of $(1)$. We provide independent proofs for both for the sake of clarity and leave it to the reader to choose their cups of tea.
	
	\begin{theorem}\label{thm: (n, nl, n(n-2)l)}
		Let $n > 2$ be a positive integer.
		\begin{enumerate}
			\item The triple $(n, n, n(n-2))$ is not index-realizable if and only if $n$ is odd and $n \neq 3$.
			\item For any positive integer $\ell$, the triple $(n, n\ell, n(n-2)\ell)$ is not index-realizable if and only if $n$ is odd and $(n-1) \nmid 2\ell$.
		\end{enumerate}
	\end{theorem}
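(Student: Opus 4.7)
My plan is to apply \autoref{thm: Groups to Graphs} and reduce the problem to one about multigraph automorphisms. A realization corresponds to a bipartite graph $\Gamma$ on parts $\eta, \kappa$ with $|\eta|=n$, $|\kappa|=n\ell$, and $n(n-2)\ell$ edges, whose automorphism group is edge-transitive. Edge counting forces each $\kappa$-vertex to have degree $n-2$ in $\Gamma$, hence degree exactly $2$ in the bipartite complement $\bar{\Gamma}$; so each $\kappa$-vertex determines an unordered pair in $\binom{\eta}{2}$, and $\kappa$ presents itself as a $2\ell$-regular multigraph $M$ on $\eta$ with $n\ell$ edges. Since parallel edges of $M$ correspond to twin vertices of $\Gamma$ and $\Aut(\Gamma)$ is transitive on $\kappa$, the edge-classes of $M$ form a single orbit $O \subseteq \binom{\eta}{2}$ under the induced transitive group $G \le S_\eta$, each appearing with common multiplicity $m = n\ell/|O|$. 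Edge-transitivity on $\Gamma$ then reduces to: the stabilizer $G_v$ acts transitively on the edge-classes of $O$ not containing $v$.

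For the forward (realizability) direction I would exhibit $M$ explicitly. When $n$ is even, take $M$ to be a perfect matching on $\eta$ with each of the $n/2$ pairs having multiplicity $2\ell$; then $\Aut(M) \supseteq S_2 \wr S_{n/2}$ acts vertex-transitively, and $G_v$ contains $S_2 \wr S_{n/2-1}$ acting transitively on the $n/2-1$ remaining matching pairs. When $n$ is odd and $(n-1)\mid 2\ell$, take $M = K_n$ with uniform multiplicity $m = 2\ell/(n-1)$; then $S_n$ is $3$-transitive on $\eta$, hence transitive on the flags $(v,\{u,w\})$ with $v \notin \{u,w\}$.

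The heart of the theorem is the converse direction, where I assume $n$ is odd and a realization $M$ exists. Let $\Gamma_O$ be the simple graph with edge set $O$, so $\Gamma_O$ is $d$-regular on $n$ vertices with $nd = 2|O|$; since $n$ is odd, $d$ must be even. The stabilizer $G_v$ preserves the partition $\eta \setminus \{v\} = N(v) \sqcup \overline{N}(v)$, so the edges of $\Gamma_O - v$ split into three $G_v$-invariant classes according to whether both endpoints lie in $N(v)$, both in $\overline{N}(v)$, or one in each. Edge-transitivity of $G_v$ on $\Gamma_O - v$ forces all edges to lie in a single one of these classes, and I would verify each: the mixed case gives by double-counting $d(d-1) = d(n-1-d)$, hence $d = n/2$, contradicting $n$ odd; the $\overline{N}$-$\overline{N}$ case forces each $u \in N(v)$ to have only $v$ as neighbor, so $d=1$ and $\Gamma_O$ is a perfect matching on $n$ vertices, again contradicting $n$ odd; so only the $N$-$N$ case survives, which forces $\overline{N}(v) = \emptyset$ and therefore $\Gamma_O = K_n$. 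Consequently $m = 2\ell/(n-1)$ must be a positive integer, i.e., $(n-1) \mid 2\ell$. Item (1) is then the specialization $\ell=1$, since for odd $n$ the condition $(n-1) \nmid 2$ is equivalent to $n \neq 3$.

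The main obstacle, I expect, is the three-way case analysis at the end---in particular, recognizing that the neighbor/non-neighbor partition of $\eta \setminus \{v\}$ is the right $G_v$-invariant structure, and then extracting from each of the two non-complete configurations a parity obstruction that fails precisely when $n$ is odd. The constructive direction, by contrast, is straightforward once one passes to the multigraph $M$.
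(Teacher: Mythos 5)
Your proof is correct, and its core is the same as the paper's: pass to the bipartite complement, observe that each $\kappa$-vertex has complement-degree $2$, and encode everything as a $2\ell$-regular multigraph on $\eta$ whose automorphism group must act transitively on non-incident vertex--edge pairs. The arguments diverge only in the endgame. The paper fixes an edge $h_1h_2$ and counts how many of $h_1,h_2$ each outside vertex is adjacent to, after first disposing of the disconnected case separately; you instead fix a vertex $v$ and sort the edges avoiding $v$ by how many endpoints lie in $N(v)$. Your vertex-centered trichotomy is slightly cleaner in that it absorbs disconnectedness automatically: if the support graph is disconnected and $d\geqslant 2$, the component of $v$ contributes an edge meeting $N(v)$ while other components contribute $\overline{N}$--$\overline{N}$ edges, so two classes are already occupied, and the remaining disconnected possibility ($d=1$, a perfect matching) is exactly your second case. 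Your direct matching-multigraph construction for even $n$ also replaces the paper's product decomposition $(m,m,m(m-1))\cdot(2,2\ell,2\ell)$; both work. One small point to spell out in a final write-up: the reduction of edge-transitivity of $\Gamma$ to transitivity of $G_v$ on the edge-classes avoiding $v$ uses both transitivity of $\Aut(\Gamma)$ on $\eta$ and the fact that parallel edges are twin vertices of $\Gamma$ (so any permutation of a parallel class extends to an automorphism); you name both ingredients but should state the equivalence explicitly.
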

	
	\begin{proof}[Proof of $(1)$]
		If $n = 2m$ is an even positive integer, then the triple $(n, n, n(n-2))$ is the product of $(m, m, m(m-1))$ and $(2, 2, 4)$, which are realizable by \autoref{lem: (n, n, n(n-1))} and \autoref{lem: (a, b, ab)}. It follows that $(n, n, n(n-2))$ is index-realizable when $n$ is even. We also know that the triple $(3, 3, 3)$ is obviously index-realizable.
		
		Now, let $n > 3$ be odd. Assume for the sake of contradiction that $(n, n, n(n-2))$ is index-realizable. By \autoref{thm: Groups to Graphs}, there exists a bipartite graph $\Gamma$ with partitions $\eta$ and $\kappa$ with $|\eta| = |\kappa| = n$ such that $\Aut(\Gamma)$ acts transitively on $E(\Gamma)$ and all vertices being $(n-2)$-regular. 
		
		Let $\bar{\Gamma}$ be the complementary bipartite graph; we see that $\Aut(\bar{\Gamma})$ acts transitively on $(\eta \times \kappa) \setminus E(\bar{\Gamma})$ and all vertices have degree $2$. This forces $\bar{\Gamma}$ to be the disjoint union of even cycle graphs, and since $\Aut(\bar{\Gamma})$ acts transitively on $\eta$ and $\kappa$, all these cycles must have equal length. 
		
		Say that $\bar{\Gamma}$ is the disjoint union of $\tfrac{n}{m}$ cycles of length $2m$, where $m > 1$. We know that $m \neq 2$ since $n$ is odd and $\tfrac{n}{m}$ is an integer. If $m \neq n$, we can find $h \in \eta$ and $k_1, k_2 \in \kappa$ such that $h$ and $k_1$ lie in the same cycle, $k_2$ lies in a different cycle, and $(h, k_1) \not\in E(\bar{\Gamma})$. This would be a contradiction since no automorphism can send $(h, k_1)$ to $(h, k_2)$. On the other hand, if $m = n$, then $\Aut(\bar{\Gamma})$ is the dihedral group of order $2n$, so $n(n-2) \leqslant 2n$, implying that $n \leqslant 4$, a contradiction.
	\end{proof}
	
	\begin{proof}[Proof of $(2)$]
		If $n = 2m$ is an even positive integer, then the triple $(n, n\ell, n(n-2)\ell)$ is the product of $(m, m, m(m-1))$ and $(2, 2\ell, 2\ell)$, which are realizable by \autoref{lem: (n, n, n(n-1))} and \autoref{prop: (a, b, c) with lcm(a, b) dividing c dividing ab}, so $(n, n\ell, n(n-2)\ell)$ is also realizable by \autoref{prop: Products of Triples}.
		
		Now, let $n$ be odd and assume that $(n, n\ell, n(n-2)\ell)$ is index-realizable. By \autoref{thm: Groups to Graphs}, there exists a bipartite graph $\Gamma$ with partitions $\eta$ and $\kappa$ with $|\eta| = n$ and $|\kappa| = n\ell$ such that $\Aut(\Gamma)$ acts transitively on $E(\Gamma)$, all vertices in $\eta$ have degree $(n-2)\ell$, and all vertices in $\kappa$ have degree $(n-2)$. If $\bar{\Gamma}$ be the complementary bipartite graph, then $\Aut(\bar{\Gamma})$ acts transitively on $(\eta \times \kappa) \setminus E(\bar{\Gamma})$, the vertices of $\eta$ have degree $2\ell$, and the vertices of $\kappa$ have degree $2$.
		
		We can equivalently record the data of $\bar{\Gamma}$ using a $2\ell$-regular multigraph $\scr{G}$ with $V(\scr{G}) = \eta$ and $E(\scr{G}) = \kappa$, where each $k \in \kappa$ connects its neighbors in $\bar{\Gamma}$. The condition on $\Aut(\bar{\Gamma})$ acting transitively on $(\eta \times \kappa)/E(\bar{\Gamma})$ now translates to $\Aut(\scr{G})$ acting transitively on the non-incident pairs in $\eta \times \kappa = V(\scr{G}) \times E(\scr{G})$.
		
		Since $n > 2$, any edge in $\kappa$ has a vertex not incident on it, so it follows that $\Aut(\scr{G})$ acts transitively on $E(\scr{G})$. This means that the connected components of $\scr{G}$ must be isomorphic. First, assume that $\scr{G}$ has more than one connected component. For any edge $h_1h_2 \in \scr{G}$ and vertex $h$ in another component, there is no automorphism sending $h$ to the component of $h_1h_2$. This means that the component containing $h_1h_2$ has no vertices other than $h_1$ and $h_2$, so every component has two vertices. This contradicts the fact that $n$ is odd.
		
		Next, consider the case where $\scr{G}$ is connected. Let $k = h_1h_2$ be an arbitrary edge. Since $\Stab(k)$ acts transitively on $\eta \setminus \{h_1, h_2\}$, every $h \in \eta \setminus \{h_1, h_2\}$ is adjacent to the same number of vertices in $\{h_1, h_2\}$.
		\begin{itemize}
			\item This number cannot be $0$ since $\scr{G}$ is connected and $n > 3$. 
			\item If the number is $1$, then since $\deg(h_1) = \deg(h_2)$, exactly half of all vertices in $\eta \setminus \{h_1, h_2\}$ must be connected to $h_1$. This implies that $n-2$, and consequently $n$, is even, which is a contradiction.
			\item If the number is $2$, this would mean that $h_1$ and $h_2$ are connected to all other vertices. Applying this to every edge $k$, we see that any pair of vertices in $\scr{G}$ are adjancent. Furthermore, since $n > 2$ and $\Aut(\scr{G})$ acts transitively on non-incident pairs, we can conclude that $\scr{G}$ has an equal number of edges between any two vertices; this number must be $\tfrac{2\ell}{n-1}$. Thus, we must have $(n-1) \mid 2\ell$, and in this case, $\scr{G}$ indeed satisfies the required conditions. \qedhere
		\end{itemize} 
	\end{proof}

	In our next theorem, $(1)$ is Theorem 3 in \cite{DM} (where it is proven using field theory) and $(2)$ is a generalization of $(1)$. Once again, we shall first prove $(1)$ since it is simpler, and then prove $(2)$. For any vertex $v$ in a graph, let $\nbhd(v)$ denote the set of vertices adjacent to $v$.
	
	\begin{theorem}\label{thm: (n, nl, npl)}
		Let $n$ be a positive integer and $p$ be a prime such that $p + 1 < n < 2p$.
		\begin{enumerate}
			\item The triple $(n, n, np)$ is not index-realizable.
			\item For any positive integer $\ell$, $(n, n\ell, np\ell)$ is index-realizable if and only if ${n \choose p} \mid n\ell$.
		\end{enumerate}
	\end{theorem}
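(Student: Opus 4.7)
For part $(1)$, I will translate via \autoref{thm: Groups to Graphs} to the non-existence of a bipartite graph $\Gamma$ with $|\eta| = |\kappa| = n$ and $np$ edges whose automorphism group acts transitively on edges; equivalently, a flag-transitive $p$-uniform, $p$-regular hypergraph $\scr H$ on $n$ vertices with $n$ hyperedges. Let $G \leqslant \mathrm{Sym}(\eta)$ be the image of $\Aut(\scr H)$; then $G$ is transitive on $\eta$, and $p \mid |G|$ since the stabilizer of a hyperedge acts transitively on its $p$ vertices. Because $p+1 < n < 2p$ with $p$ prime, $p \nmid n$, so any Sylow $p$-subgroup $P \leqslant G$ has orbits on $\eta$ of size $p^i \leqslant n < 2p$, forcing sizes in $\{1, p\}$; counting then gives exactly one orbit $\Omega_0$ of size $p$ and $d := n - p$ fixed points. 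A non-identity $\sigma \in P$ acts as a $p$-cycle on $\Omega_0$ and trivially elsewhere, making $\Omega_0$ the unique $\sigma$-invariant $p$-subset of $\eta$. The set $\cal O = \{N(k) : k \in \kappa\}$ is a single $G$-orbit, with uniform multiplicity $m$ satisfying $m|\cal O| = n$, and $\sigma$ acts on $\cal O$ with at most one fixed point, so $|\cal O| \equiv 0$ or $1 \pmod p$. If $\Omega_0 \notin \cal O$, then $p \mid |\cal O| \leqslant n < 2p$ forces $|\cal O| = p$ and hence $p \mid n$, a contradiction; if $\Omega_0 \in \cal O$, then $|\cal O| = 1 + pq$, and combining $m(1 + pq) = n$ with $G$-transitivity on $\eta$ (which rules out $|\cal O| = 1$) and $|\cal O| \leqslant n < 2p$ leaves no valid integer solution.

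For part $(2)$, the direction $(\Leftarrow)$ is witnessed by the construction $\eta = [n]$, $\kappa = [m] \times \binom{[n]}{p}$ with $m = n\ell/\binom{n}{p}$ and $(i, S) \sim h$ iff $h \in S$; the action of $S_n \times S_m$ is edge-transitive. For $(\Rightarrow)$, I use the hypergraph setup with $|\kappa| = n\ell$ and the same Sylow analysis as in part $(1)$ to obtain $P \leqslant G$, its unique $p$-orbit $\Omega_0$, and a $p$-cycle $\sigma \in P$; the goal is to show $\cal O = \binom{\eta}{p}$, from which $\binom{n}{p} = |\cal O|$ divides $m|\cal O| = n\ell$. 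First, I will prove that $G$ acts primitively on $\eta$: any nontrivial block $B$ would have to be $P$-invariant (otherwise its $P$-orbit consists of $\geqslant p$ disjoint blocks totaling $\geqslant 2p > n$ points), and $P$-invariance forces $B$ to either lie in $\mathrm{Fix}(P)$ with $|B| \leqslant d$ or to contain $\Omega_0$ with $|B| \geqslant p$, each case contradicting the requirement that equal-sized blocks partition $\eta$. When $d \geqslant 3$ (equivalently $p \leqslant n - 3$), Jordan's theorem on primitive groups containing a prime-length cycle yields $G \supseteq A_n$; since $A_n$ is $(n - 2)$-transitive, it acts transitively on $p$-subsets, so $\cal O = \binom{\eta}{p}$.

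The main obstacle will be the boundary case $d = 2$ (so $n = p + 2$), where Jordan's theorem does not directly apply. I plan to handle it by proving $G$ is $2$-transitive: if some $G_y$ with $y \in \mathrm{Fix}(\sigma)$ is not transitive on $\eta \setminus \{y\}$, then its $\sigma$-invariant orbits must be $\{x\}$ and $\Omega_0$ (with $x$ the other $\sigma$-fixed point), so $G_y$ fixes $x$ and $G_y \leqslant G_x$; swapping the roles of $x$ and $y$ yields $G_x \leqslant G_y$, so $G_x = G_y$. The resulting $G$-invariant equivalence $u \sim v \iff G_u = G_v$ is then nontrivial, which contradicts primitivity unless every $G_u$ is trivial (so $G$ is regular of order $n$); but $p \nmid n$ would then contradict $p \mid |G|$. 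Hence $G$ is $2$-transitive and thus transitive on $2$-subsets; by complementation (using $p = n - 2$) it is transitive on $p$-subsets, giving $\cal O = \binom{\eta}{p}$ in this case as well.
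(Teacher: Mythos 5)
Your proposal is correct, but it diverges from the paper's argument in both parts, in ways worth recording. For part $(1)$, the paper also produces a $p$-power-order $\sigma$ acting as a $p$-cycle on $\nbhd(k)$, but then analyzes the induced action of $\sigma$ on $\kappa$ (one $p$-cycle, $n-p$ fixed points), determines the neighbor sets of fixed and non-fixed vertices explicitly, and derives a contradiction from the existence of distinct vertices in $\eta$ with equal neighborhoods. Your argument replaces all of that structural analysis with a single counting step: the orbit $\cal{O}$ of neighbor-sets satisfies $m|\cal{O}| = n$ and $|\cal{O}| \equiv 0$ or $1 \pmod{p}$ because $\Omega_0$ is the only $\sigma$-invariant $p$-subset, and the window $p+1 < n < 2p$ kills every candidate value $|\cal{O}| \in \{1, p, p+1\}$. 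This is genuinely slicker and buys a shorter proof. For part $(2)$, your primitivity-plus-Jordan core matches the paper's, but the paper disposes of the boundary case $n = p+2$ by observing it is a special case of \autoref{thm: (n, nl, n(n-2)l)}, whereas you handle it self-containedly via the $2$-transitivity argument (stabilizer orbits forced to be $\{x\}$ and $\Omega_0$, hence $G_x = G_y$, hence a nontrivial block system unless $G$ is regular, which $p \nmid n$ forbids); that makes the theorem independent of \autoref{thm: (n, nl, n(n-2)l)}, at the cost of a little extra work. Two small points to tighten: in the primitivity step, the case of a block $B \subseteq \mathrm{Fix}(P)$ is not by itself contradictory — the contradiction is that, since every block in the system is $P$-invariant and the blocks cover $\eta$, \emph{some} block must contain the whole $P$-orbit $\Omega_0$ and hence has size at least $p > n/2$, forcing the system to be trivial; and you should note explicitly that $G$, being the image in $\mathrm{Sym}(\eta)$, acts faithfully, which is what guarantees $P$ has a nontrivial orbit and that $P \cong \Z/p$ is generated by a genuine $p$-cycle.
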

	
	\begin{proof}[Proof of $(1)$]
		Assume for the sake of contradiction that $(n, n, np)$ is index-realizable. Let $\Gamma$ be a bipartite graph with partitions $\eta$ and $\kappa$ satisfying the conditions of \autoref{thm: Groups to Graphs} for the triple $(n, n, np)$; note that $\Gamma$ is $p$-regular. Pick any $hk \in E(\Gamma)$; we know that $[\Stab(k): \Stab(hk)] = p$, so any Sylow $p$-subgroup of $\Stab(k)$ is not contained in $\Stab(hk)$. Consequently, there exists $\sigma \in \Stab(k)$ with order equal to a power of $p$ which does not fix $h$. Since $p < n < 2p$, we see that $\sigma$ acts on $\nbhd(k)$ as a $p$-cycle and fixes the points in $\eta \setminus \nbhd(k)$.
		
		For any $k' \in \kappa$, since $p > \tfrac{n}{2}$, we have $\nbhd(k) \cap \nbhd(k') \neq \emptyset$. Furthermore, if $k'$ is fixed by $\sigma$, we must have $\nbhd(k') = \nbhd(k)$ since $\nbhd(k')$ must be $\sigma$-stable and $\sigma$ acts transitively on $\nbhd(k)$. This cannot be the case for all $k' \in \kappa$, so $\sigma$ has one $p$-cycle and $n-p$ fixed points in $\kappa$. Repeating the same argument:
		\begin{itemize}
			\item Every $\sigma$-fixed point in each partition is adjacent to the points in the $p$-cycle of $\sigma$ in the opposite partition.
			\item Each point not fixed by $\sigma$ in one partition must have $p - (n-p) = 2p-n$ neighbors not fixed by $\sigma$ in the opposite partition.
		\end{itemize}
		
		Since $n-p > 1$, we see that $\eta$ contains points with the same neighbor set. Since $\Aut(\Gamma)$ acts transitively on $\eta$, if we pick $h' \in \eta$ not fixed by $\sigma$, there must exist another $h'' \in \eta$ with $\nbhd(h'') = \nbhd(h')$. Note that $h''$ must also not be fixed by $\sigma$, so $h'' = \sigma^ih'$ for some $0 \leqslant i < p$. However, the set of $2p-n$ neighbors of $h'$ not fixed by $\sigma$ cannot be stable under $\sigma^i$ for any $0 \leqslant i < p$ since $p$ is prime. Thus, $\nbhd(\sigma^ih') \neq \nbhd(h')$ for any $0 \leqslant i < p$, yielding the required contradiction and proving that $(n, n, np)$ is not index-realizable.
	\end{proof}
	
	\begin{proof}[Proof of $(2)$]
		First, note that the case $n = p + 2$ is a special case of \autoref{thm: (n, nl, n(n-2)l)}, so assume that $p + 2 < n < 2p$. Let $\Gamma$ be a bipartite graph with partitions $\eta$ and $\kappa$ satisfying the conditions of \autoref{thm: Groups to Graphs} for the triple $(n, n\ell, np\ell)$. The vertices in $\kappa$ are $p$-regular, so for any $hk \in E(\Gamma)$, we have $[\Stab(k): \Stab(hk)] = p$. As before, we can find $\sigma \in \Stab(k)$ of order equal to a power of $p$ which acts on $\nbhd(k)$ as a $p$-cycle and fixes $\eta \setminus \nbhd(k)$.
		
		We claim that the action of $\Aut(\Gamma)$ on $\eta$ is primitive. Since $p$ is prime and the points in $\nbhd(k)$ form a $p$-cycle under $\sigma$, these points must all lie in the same block or in pairwise distinct blocks. The latter is impossible since it would imply that we have at least $p > \tfrac{n}{2}$ blocks, implying that there are $n$ blocks, which is absurd since $\Aut(\Gamma)$ acts non-trivially on $\eta$. In the former case, since the block containing $\nbhd(k)$ has more than $\tfrac{n}{2}$ points, it must contain all of $\eta$, proving that the action of $\Aut(\Gamma)$ on $\eta$ is primitive.
		
		Now, the primitive transitive action of $\Aut(\Gamma)$ on $\eta$ has a $p$-cycle, and $p < n - 2$, so by Jordan's theorem \cite{Jor}, the image of $\Aut(\Gamma)$ in $S_{\eta} = S_n$ must be either $S_n$ or $A_n$. In particular, $\Aut(\Gamma)$ must act $(n-2)$-transitively on $\eta$, and since $p < n - 2$, it also acts $p$-transitively. This means that every $p$-subset of $\eta$ is the image of $\nbhd(k)$ under some automorphism of $\Gamma$, and hence of the form $\nbhd(k')$ for some $k' \in \kappa$. Since $\Aut(\Gamma)$ acts transitively on $\kappa$, it follows that ${n \choose p} \mid n\ell$, and in this case, we have constructed a bipartite graph $\Gamma$ with the required properties.
	\end{proof}

	A common theme of the previous two theorems is that it is difficult for triples of the form $(n, n, nt)$ with large $t$ to be index-realized. The following proposition exhibits examples of such kind arising naturally from geometry.
	
	\begin{proposition}\label{prop: Geometric triples}
		Let $q$ be a prime power and $n > d$ be any positive integers. The triple:
		\[(a, b, c) = \left({n \choose d}_q, {n \choose d}_q, q^{d(n-d)}{n\choose d}_q\right)\]
		is index-realizable, where ${\bullet \choose \bullet}_q$ is the $q$-binomial coefficient.
	\end{proposition}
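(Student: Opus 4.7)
The plan is to apply \autoref{thm: Groups to Graphs} by constructing an explicit edge-transitive bipartite graph $\Gamma$ whose vertices index subspaces of $\mathbb{F}_q^n$ and whose edges encode a complementarity relation. Writing $W = \mathbb{F}_q^n$, I would take $\eta$ to be the set of $d$-dimensional subspaces of $W$, $\kappa$ to be the set of $(n-d)$-dimensional subspaces of $W$, and declare $U \in \eta$ adjacent to $K \in \kappa$ precisely when $U \oplus K = W$. Since $\binom{n}{n-d}_q = \binom{n}{d}_q$, this gives $|\eta| = |\kappa| = \binom{n}{d}_q$ at once.

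The next step is the counting: I claim every $U \in \eta$ has exactly $q^{d(n-d)}$ complementary $(n-d)$-subspaces. This follows by choosing an ordered basis $v_1, \ldots, v_{n-d}$ of a prospective complement subject to $v_i \notin U + \mathrm{span}(v_1, \ldots, v_{i-1})$, which gives $\prod_{i=1}^{n-d}(q^n - q^{d+i-1})$ choices, and dividing by the number $\prod_{i=0}^{n-d-1}(q^{n-d} - q^i)$ of ordered bases of any fixed $(n-d)$-subspace. A routine simplification reduces the ratio to $q^{d(n-d)}$. By symmetry the same holds from the $\kappa$-side, so $\Gamma$ is $q^{d(n-d)}$-regular on both sides, has no isolated vertices, and has exactly $q^{d(n-d)} \binom{n}{d}_q$ edges, matching condition $(2)$ of \autoref{thm: Groups to Graphs}.

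Finally, for edge-transitivity, observe that $GL_n(\mathbb{F}_q)$ acts on $\Gamma$ preserving the bipartition, and it acts transitively on pairs of complementary subspaces: any such pair $(U, K)$ is equivalent under a linear change of coordinates to the standard pair $(\mathrm{span}(e_1, \ldots, e_d), \mathrm{span}(e_{d+1}, \ldots, e_n))$, by extending a basis of $U$ by a basis of $K$. Hence $\Aut(\Gamma)$ acts transitively on $E(\Gamma)$, and \autoref{thm: Groups to Graphs} yields the claim.

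This proof has no serious obstacle; the construction is essentially forced by the requirement that the degree of each vertex be $q^{d(n-d)}$, and the only mildly technical step is the counting of complementary subspaces, which is entirely standard. One could alternatively phrase the realization directly at the group level: take $G = GL_n(\mathbb{F}_q)$, $H$ the stabilizer of a fixed $d$-subspace, and $K$ the stabilizer of a fixed $(n-d)$-subspace complementary to it, in which case $H \cap K$ is the Levi subgroup $GL_d(\mathbb{F}_q) \times GL_{n-d}(\mathbb{F}_q)$ and the index computation reproduces the triple.
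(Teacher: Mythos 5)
Your proof is correct and follows essentially the same route as the paper: the same bipartite graph on $d$-dimensional and codimension-$d$ subspaces with complementarity as adjacency, the same count of $q^{d(n-d)}$ complements, and the same appeal to \autoref{thm: Groups to Graphs} (you merely spell out the $GL_n(\mathbb{F}_q)$-action giving edge-transitivity, which the paper leaves implicit). The closing remark realizing the triple directly with parabolic stabilizers and the Levi intersection is a nice equivalent reformulation, not a different argument.
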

	
	\begin{proof}
		Let $\Gamma$ be the bipartite graph with $\eta$ and $\kappa$ corresponding to the subspaces of dimension $d$ and codimension $d$ in $\mathbb{F}_q^n$ respectively, such that for any $hk \in \eta \times \kappa$, we have $hk \in E(\Gamma)$ if and only if $h$ and $k$ are complements. For any subspace of dimension $d$, there are:
		\[\frac{(q^n-q^d)(q^n-q^{d+1}) \cdots (q^n-q^{n-1})}{(q^{n-d}-1)(q^{n-d}-q) \cdots (q^{n-d}-q^{n-d-1})} = q^{d(n-d)}\]
		complementary subspaces, so $\Gamma$ is $q^{d(n-d)}$-regular. Using \autoref{thm: Groups to Graphs}, we see that $(a, b, c)$ is index-realizable.
	\end{proof}

	\section{Small Triples}\label{sec: Small Triples}
	
	We conclude by classifying all triples $(a, b, c)$ of positive integers with $\min(a, b) \leqslant 10$ that are index-realizable. By \autoref{prop: Necessary Conditions} and \autoref{prop: (a, b, c) with lcm(a, b) dividing c dividing ab}, it suffices to look at triples $(a, b, c)$ such that $\lcm(a, b) \mid c $, $c < ab$, and $c \nmid ab$. 
	
	Our procedure will be to go through the cases $1 \leqslant a \leqslant 10$. Letting $b$ be arbitrary, we must look at $c = db$ for positive integers $1 < d < a$ such that $a \mid db$ and $c \nmid ab$, i.e. $d \nmid a$. We can filter the following cases:
	\begin{enumerate}
		\item If $d = a - 1$, then the condition $a \mid db$ implies that $a \mid b$, so $b = ae$ for some positive integer $e$. Then, the triple has the form $(a, ae, a(a-1)e)$, which is the product of $(a, a, a(a-1))$ and $(1, e, e)$. These triples are index-realizable, the former by \autoref{lem: (n, n, n(n-1))}. By \autoref{prop: Products of Triples}, $(a, b, c)$ is index-realizable.
		\item If $d = p$ is a prime and $p \mid \varphi(a)$, since $p \nmid a$ and $a \mid pb$, we obtain $a \mid b$, so $b = ae$ for some positive integer $e$. Then, the triple has the form $(a, ae, ape)$, which is the product of $(a, a, ap)$ and $(1, e, e)$.These triples are index-realizable, the former by \autoref{lem: (n, n, np) if p divides phi(n)}. By \autoref{prop: Products of Triples}, $(a, b, c)$ is index-realizable.
		\item If $d = a - 2$, then we must have $a \mid (a-2)b$, so $a \mid 2b$. If $a$ is odd, this implies $a \mid b$, so $b = ae$ for some positive integer $e$. In this case, we have $(a, ae, a(a-2)e)$, which is not index-realizable by \autoref{thm: (n, nl, n(n-2)l)}. On the other hand, if $a$ is even, then we have $a = 2e$ for some positive integer $e$. We know that $e \mid b$, so $b = ef$ for some positive integer $f$. We have $(a, b, c) = (2e, ef, 2e(e-1)f) = (e, e, e(e-1)) \cdot (2, f, 2f)$, where the two factors are index-realizable by \autoref{lem: (n, n, n(n-1))} and \autoref{lem: (a, b, ab)}, so $(a, b, c)$ is index-realizable.
		\item If $d = p$ is a prime and $p + 1 < a < 2p$, since $p \nmid a$, we see that our triple must have the form $(a, ae, ape)$. By \autoref{thm: (n, nl, npl)}, such a triple is index-realizable if and only if ${a \choose p} \mid ae$.
	\end{enumerate}
	Since we understand what happens for the above $d$, we may iterate through all values $1 \leqslant a \leqslant 10$ and look at all values $1 < d < a$ such that $d \nmid a$, which haven't been dealt with above. For $1 \leqslant a \leqslant 6$, there are no such values.
	
	\begin{itemize}
		\item For $a = 7$, we must look at $d = 4$. In this case, we have $7 \mid 4b$, so $b = 7e$ for some positive integer $e$. Our triple has the form $(7, 7e, 28e) = (7, 7, 28) \cdot (1, e, e)$. We know that $(7, 7, 28)$ is index-realizable by \autoref{prop: Geometric triples} using $(q, n, d) = (2, 3, 1)$. It follows that $(a, b, c)$ is index-realizable.
		\item For $a = 8$, we must look at $d = 3$. In this case, $(a, b, c) = (8, 8e, 24e)$. The triple $(8, 8, 24)$ is realized using $G = S_4$, with $H$ and $K$ being any two distinct Sylow $3$-subgroups, so by \autoref{prop: Products of Triples}, $(8, 8e, 24e)$ is index-realizable.
		\item For $a = 9$, we must look at $d = 4$ and $d = 6$. In these cases, we have triples of the form $(a, b, c) = (9, 9e, 36e)$ and $(a, b, c) = (9, 3e, 18e)$ respectively. In both cases, the triples are index-realizable by \autoref{prop: Products of Triples}:
		\begin{itemize}
			\item $(9, 9e, 36e) = (3, 3, 6)^2 \cdot (1, e, e)$
			\item $(9, 3e, 18e) = (3, 3, 6) \cdot (3, e, 3e)$
		\end{itemize}
		\item For $a = 10$, we must look at the values $d = 3, 4, 6$. The corresponding triples are $(a, b, c) = (10, 10e, 30e), (10, 5e, 20e), (10, 5e, 30e)$.
		\begin{itemize}
			\item The triple $(10, 10, 30)$ is index-realizable using $G = S_5$, $H = \Stab(\{1, 2\})$, and $K = \Stab(\{3, 4\})$. By \autoref{prop: Products of Triples}, so is $(10, 10e, 30e)$.
			\item The triple $(10, 5e, 20e)$ is index-realizable by \autoref{prop: Products of Triples} since it can be decomposed as $(5, 5, 10) \cdot (2, e, 2e)$.
			\item The triple $(10, 5, 30)$ is index-realizable by \autoref{thm: (n, nl, n(n-2)l)}. By \autoref{prop: Products of Triples}, so is $(10, 5e, 30e)$.
		\end{itemize}
	\end{itemize}
	This concludes the classification of index-realizable triples for $\min(a, b) \leqslant 10$.

	\newpage
    \bibliographystyle{alpha}
    \bibliography{biblio} 
	
\end{document}